\documentclass[10pt,oneside,a4paper]{amsart}
\linespread{1.0}
\usepackage{latexsym}
\usepackage{amsfonts,amsmath,amssymb,indentfirst}
\newcommand{\beq}{\begin{equation}}
\newcommand{\eeq}{\end{equation}}
\newcommand{\bdism}{\begin{displaymath}}
\newcommand{\edism}{\end{displaymath}}

\newcommand{\setR}{\mathbb R}

\newtheorem{theorem}{Theorem}[section]

\author{\scshape Luca Fabrizio Di Cerbo}
\title{\bf A Ricci nilsoliton is nongradient}
\begin{document}

\thanks{*Supported in part by a Renaissance Technologies Fellowship.}
\address{Department of Mathematics, SUNY, Stony Brook, NY 11794-3651,
USA} \email{luca@math.sunysb.edu}
\maketitle

\pagenumbering{arabic}
In this brief note, we clarify that a Ricci nilsoliton cannot be of gradient type. The key is the rigidity theorem for gradient homogeneous Ricci solitons proved by Petersen and Wylie in \cite{Pet}. Recall that a gradient Ricci soliton is a Riemannian metric $g$ togheter with a function $f$ such that  
\begin{align}
{\rm Ric}_{g}+{\rm Hess}f=\lambda g,
\end{align}
where $\lambda$ is the soliton constant.

In \cite{Lau}, Lauret proves the existence of many left invariant Ricci solitons on nilpotent Lie groups. The first explicit construction of Lauret solitons has been obtained by Baird and Danielo in \cite{Danielo}. In particular they show that the soliton structure on  ${\rm Nil}^{3}$ is of nongradient type. Remarkably this is the first example of nongradient Ricci soliton. In \cite{Luca}, it is proved that any left invariant gradient Ricci soliton must have a nontrivial Euclidean de Rham factor. As an application of this result it is shown that any generalized Heisenberg Lie group is a nongradient left invariant Ricci soliton. In \cite{Pet}, Petersen and Wylie study the rigidity of gradient Ricci solitons with symmetry. Recall that a simply connected gradient Ricci soliton is called rigid if it is isometric to $N\times\setR^{k}$ where $N$ is an Einstein manifold and $f=\frac{\lambda}{2}\left|x\right|^{2}$ on the Euclidean factor. Among many other results, Petersen and Wylie prove the following optimal result for homogeneous gradient Ricci solitons.
\begin{theorem}(Petersen-Wylie)
All homogeneous gradient Ricci solitons are rigid.
\end{theorem}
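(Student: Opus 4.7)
The plan is to exploit the strong rigidity that homogeneity imposes on scalar curvature invariants, and then reduce the soliton equation to a trivial splitting. The main tool, beyond homogeneity itself, is the classical identity
\[
\nabla S = 2\,{\rm Ric}(\nabla f),
\]
valid on any gradient Ricci soliton; it follows by taking the divergence of the defining equation, applying the contracted second Bianchi identity, and using the trace relation $S + \Delta f = n\la$.

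First I would note that on a homogeneous Riemannian manifold the scalar curvature $S$ is constant, so $\nabla S \equiv 0$, and the identity above forces ${\rm Ric}(\nabla f) \equiv 0$. In particular $\nabla f$ lies pointwise in the kernel of the Ricci tensor. Since the Ricci eigenvalues of a homogeneous metric are globally constant, $\mathcal{E} := \ker({\rm Ric})$ is a smooth distribution of constant rank $k$ on $M$, and $\nabla f$ is a section of $\mathcal{E}$. Restricted to $\mathcal{E}$ the soliton equation reads ${\rm Hess}\,f = \la g$, while on the Ricci-invariant complement $\mathcal{E}^{\perp}$ the Ricci operator is invertible and carries the genuine Einstein content.

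The core of the argument, and the step I expect to be the main obstacle, is promoting $\mathcal{E}$ from a smooth distribution of constant rank to a parallel distribution on $M$. One must combine the soliton equation, the second Bianchi identity, and the constancy of the Ricci spectrum; this is the technical heart of the Petersen--Wylie analysis, and it is where the isometric homogeneity of $g$ (as opposed to merely having constant scalar invariants) really enters. Once parallelism of $\mathcal{E}$ is established, the de Rham decomposition theorem applied to the simply connected cover gives an isometric splitting $M = N \times \setR^{k}$ with $\setR^{k}$ tangent to $\mathcal{E}$. The equation ${\rm Hess}\,f = \la g$ integrates on the Euclidean factor to $f = \frac{\la}{2}|x|^{2}$ after an affine change of origin, while $N$ inherits a homogeneous metric satisfying ${\rm Ric} = \la g$, i.e., $N$ is Einstein. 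This is precisely the notion of rigidity recalled in the introduction, completing the proof.
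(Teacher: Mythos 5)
There is a genuine gap, and it sits exactly where you flag it. Note first that the statement you were asked to prove is not proved in this paper at all: it is quoted from Petersen--Wylie \cite{Pet} and used as a black box, so the only question is whether your sketch stands on its own. It does not. Your preliminary steps are fine: $\nabla S=2\,{\rm Ric}(\nabla f)$ is the standard contracted-Bianchi identity for gradient solitons, homogeneity makes $S$ constant, hence $\nabla f$ is a section of $\mathcal{E}=\ker({\rm Ric})$, which has constant rank because the Ricci spectrum is constant. But the step you defer --- ``promoting $\mathcal{E}$ to a parallel distribution'' --- is not a technical detail to be supplied later; it \emph{is} the theorem. Once one knows that $\mathcal{E}$ is parallel and that ${\rm Ric}=\lambda g$ on $\mathcal{E}^{\perp}$, rigidity is immediate from de Rham, so asserting these two facts is logically equivalent to asserting the conclusion. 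Note also that your claim that $\mathcal{E}^{\perp}$ ``carries the genuine Einstein content'' is itself unproved: the soliton equation only gives ${\rm Hess}f=\lambda g-{\rm Ric}$ there, and showing ${\rm Hess}f|_{\mathcal{E}^{\perp}}=0$ (equivalently, that $f$ depends only on the flat directions) is again part of the content of rigidity, not a consequence of the setup.

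For comparison, what Petersen and Wylie actually do is derive, from constant scalar curvature, the auxiliary identities $|\nabla f|^{2}=2\lambda f+c$ and ${\rm Hess}f(\nabla f)=\lambda\nabla f$, and then prove \emph{radial flatness}, $R(\cdot,\nabla f)\nabla f=0$, by an analysis along the integral curves of $\nabla f$ in which homogeneity enters again through uniform curvature bounds; constant scalar curvature together with radial flatness is their characterization of rigidity, and only at that stage does the kernel distribution become parallel and the de Rham splitting $N\times\setR^{k}$ with $f=\frac{\lambda}{2}|x|^{2}$ appear. Your outline reproduces the easy bookends of that argument and omits its core. A further loose end: when the Einstein constant is $0$, $\ker({\rm Ric})$ can be strictly larger than the Euclidean factor; that case is handled by the Alekseevskii--Kimelfeld theorem (homogeneous Ricci-flat implies flat), which your sketch never invokes. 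As written, then, the proposal is an accurate roadmap of the known proof with its decisive step missing, not a proof.
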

Using this nice result we can now prove the following optimal generalization of theorem 3.7 in \cite{Luca}.
\begin{theorem}\label{rigidity}
A nilsoliton cannot be of gradient type.
\end{theorem}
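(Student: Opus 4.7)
The plan is to derive a contradiction by combining the Petersen--Wylie rigidity theorem with Milnor's classical positivity result for the Ricci tensor of left-invariant metrics on nonabelian nilpotent Lie groups.

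Suppose $(N,g)$ is a nilsoliton on a simply connected nonabelian nilpotent Lie group, and that it is of gradient type, so that ${\rm Ric}_{g}+{\rm Hess}\,f=\lambda g$ for some smooth function $f$. Since $g$ is left-invariant the manifold is homogeneous, and the Petersen--Wylie theorem stated above yields an isometric Riemannian product decomposition
\begin{equation*}
(N,g)\;\cong\;(M,g_{M})\times(\setR^{k},g_{0}),
\end{equation*}
where $(M,g_{M})$ is Einstein with Einstein constant $\lambda$. Because the Euclidean factor is Ricci flat, the Ricci operator of $(N,g)$ has spectrum contained in $\{\lambda,0\}$. Combined with the standard fact from \cite{Lau} that the soliton constant of a nilsoliton on a nonabelian nilpotent Lie group is strictly negative, $\lambda<0$, this forces ${\rm Ric}_{g}$ to be negative semidefinite on $TN$.

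The desired contradiction comes from Milnor's observation that on any nonabelian nilpotent Lie group the Ricci operator of a left-invariant metric admits at least one strictly positive eigenvalue. Indeed, by nilpotency the subspace $\mathfrak{z}(\mathfrak{n})\cap[\mathfrak{n},\mathfrak{n}]$ is nontrivial, and for any nonzero $Z$ in this intersection Milnor's formula reduces, in any $g$-orthonormal basis $\{e_{i}\}$ of $\mathfrak{n}$, to
\begin{equation*}
{\rm Ric}(Z,Z)\;=\;\tfrac{1}{4}\sum_{i,j}\langle[e_{i},e_{j}],Z\rangle^{2},
\end{equation*}
which is strictly positive, since $Z\in[\mathfrak{n},\mathfrak{n}]$ prevents the bracket pairings $\langle[e_{i},e_{j}],Z\rangle$ from vanishing simultaneously. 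The presence of this strictly positive central direction contradicts the negative semidefiniteness derived from the Petersen--Wylie splitting, and the theorem follows.

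The main step, and the real content of the note, is the use of Petersen--Wylie to upgrade the earlier result of \cite{Luca}---which only provided a nontrivial Euclidean de Rham factor for a left-invariant gradient Ricci soliton---into a full isometric product with Einstein first factor. Once that refinement is available the conclusion is immediate, and no further structure theory (for example a splitting theorem for the isometry group of a simply connected nilpotent Lie group) is required.
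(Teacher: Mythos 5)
Your proposal is correct and follows essentially the same route as the paper: invoke the Petersen--Wylie rigidity theorem to split the gradient nilsoliton isometrically as an Einstein factor times a Euclidean factor, note that the Ricci tensor is then semi-definite, and contradict Milnor's result that a left-invariant metric on a noncommutative nilpotent Lie group has Ricci curvatures of both signs. The only difference is cosmetic: you re-derive the strictly positive central Ricci direction from Milnor's formula (and use $\lambda<0$ from Lauret), whereas the paper simply cites Milnor's Theorem 2.4 on mixed Ricci curvatures, which already suffices without knowing the sign of $\lambda$.
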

\begin{proof}
Let $\left\lbrace\mathfrak{n}, \left\langle , \right\rangle\right\rbrace $ be a nilpotent soliton metric Lie algebra. Now, if the soliton is assumed to be of gradient type it must be isometric to a Riemannian product $N\times\setR^{k}$  where $N$ is an Einstein manifold. This implies that the Ricci tensor of our soliton is semi-definite. This is absurd with that fact any left invariant metric over a noncommutative nilpotent Lie group must have mixed Ricci curvatures, see theorem 2.4 in \cite{Milnor}. 
\end{proof}
\section{Final remarks}
Theorem \ref{rigidity} together with the results in \cite{Luca} provide a fairly complete description of the geometry of a Ricci nilsoliton. It would be now interesting to classify all the nilpotent Lie algebras that admit a soliton structure. In particular it would be interesting to understand if the characteristically nilpotent condition is the only obstruction to the existence of a soliton metric. For more details we refer to \cite{Lau}, \cite{Luca}.

\end{document}